\documentclass[11pt]{amsart}
\usepackage[latin1]{inputenc}
\usepackage{amssymb,epsfig}
\usepackage{amsmath, amsthm}
\usepackage{a4wide}
\usepackage{amsfonts}
\usepackage{float}
\usepackage{cite}
\usepackage{pdfsync}
\usepackage{color}
\usepackage{enumerate}

\linespread{1.3}

\newtheorem{theorem}{Theorem}

\newtheorem{corollary}{Corollary}

\newtheorem{remark}{Remark}

\title{A strong invariance principle for the elephant random walk}
%\author{Cristian F. Coletti, Renato J. Gava and Gunter M. Sch\"utz}

%\date{\today}

%%%%%%%%%%%%%%%%%%%%%%%

\author{Cristian F. Coletti, Renato Gava and Gunter M. Sch\"utz}

\date{\today}

\address{
\newline 
UFABC - Centro de Matem\'atica, Computa\c{c}\~ao e Cogni\c{c}\~ao.
\newline
Avenida dos Estados, 5001, Santo Andr\'e - S\~ao Paulo, Brasil
\newline
e-mail:  \rm \texttt{cristian.coletti@ufabc.edu.br}
\newline 
\newline
UFSCAR - Departamento de Estat\'{\i}stica.
\newline  Rodovia Washington Luiz, Km 235, CEP 13565-905, S\~ao Carlos, Brasil
\newline
e-mail:  \rm \texttt{gava@ufscar.br} 
\newline
\newline
Institute of Complex Systems II
\newline
Forschungszentrum J\"ulich, 52425 J\"ulich, Germany
\newline
e-mail:  \rm \texttt{g.schuetz@fz-juelich.de}
}

%%%%%%%%%%%%%%%%%%%%%%%
\begin{document}

\begin{abstract}
We consider a non-Markovian discrete-time random walk on $\mathbb{Z}$ with unbounded memory called the elephant random walk (ERW). We prove a strong invariance principle for the ERW. More specifically, we prove that, under a suitable scaling and in the diffusive regime as well as at the critical value $p_c=3/4$ where the model is marginally superdiffusive, the ERW is almost surely well approximated by a Brownian motion. As a by-product of our result we get the law of iterated logarithm and the central limit theorem for the ERW. 
\end{abstract}

\maketitle

\section{Introduction and results} \label{intro}

In this work we deal with the so-called elephant random walk (ERW) introduced in \cite{ST}, 
a discrete-time nearest neighbour random walk $X_n$ on $\mathbb{Z}$ with long-range 
memory whose random increments at each time step depend on the entire history 
of the process. 
The ERW presents both normal and anomalous diffusion, i.e., depending on the parameter $p \in (0,1)$ 
the variance of $X_n$ either is a linear function of $n$ or not. More precisely, the 
ERW exhibits a phase transition at the critical value $p_c = 3/4$. If $p <  3/4$ the model lives in a diffusive 
regime, and if $p > 3/4$ it shows a superdiffusive regime.

%Contrary to what happens for the simple symmetric random walk on $\mathbb{Z}$, for instance, 
%the second moment of $X_n$ is not linear function of $n$. That is the so-called anomalous diffusion.

The ERW has been received considerable attention in the literature in the last years, and results on it
range from exact moments \cite{ST,Para06,daSi13} to large deviation results \cite{Harr09}
up to connection with bond percolation on random recursive trees \cite{Kuer16} and P\'olya 
urn-types \cite{Baur16}. The extension of the ERW of \cite{Harb14} exhibits also subdiffusion.
The first limit theorems on the ERW show up in  \cite{Baur16,CGS}. In \cite{Baur16} functional 
limit theorems are derived for the diffusive regime as well as for the superdiffusive 
regime. The strong law of large numbers, the central limit theorem (CLT) and other limit theorems for 
the ERW are proven in \cite{CGS} for both regimes.  

%We establish a strong invariance principle for the ERW that holds in the diffusive regime as 
%well as at the critical value $p_c = 3/4$ where the model is marginally superdiffusive. This 
%result guarantees that we can approximate the ERW almost surely by a standard Brownian motion. 
%As a corollary, we get the law of the iterated logarithm (LIL) for the ERW. Furthermore, as a 
%by-product of the strong invariance principle, we get a weak invariance principle for the ERW 
%that shows an approximation in probability of the ERW by a Brownian motion, which in turn implies
%the central limit theorem (CLT) for $p \leq 3/4$. Both invariance principles depend on different 
%scales and give us explicit rates of approximation. Beyond its own interests, our findings 
%extend the CLT of \cite{CGS} and prove the LIL for the ERW, a limit theorem not covered in 
%\cite{Baur16,CGS}. 

We establish a strong invariance principle for the ERW that holds in the diffusive regime as 
well as at the critical value $p_c = 3/4$ where the model is marginally superdiffusive. This 
result guarantees that we can approximate the ERW almost surely by a standard Brownian motion. 
Furthermore, as a by-product of the strong invariance principle, we get a weak invariance principle 
for the ERW for $p \leq 3/4$ that shows an approximation in probability of the ERW by a Brownian 
motion. Both statements depend on different scales and give us explicit rates of approximation. 
Beyond its own interests, our findings prove the law of iterated logarithm (LIL) for the ERW, a 
limit theorem not covered in %the recent works that contain rigorous results on the ERW 
\cite{Baur16,CGS}, and extend the CLT of \cite{CGS}.

The way by which we study the long-time behaviour of the ERW depends deeply on martingale theory 
\cite{CR,HH,Wi}. The different manners in which the term invariance principle are employed can be 
observed in \cite{Ph,SS} and references therein. We notice that, similar to Donsker's theorem, the 
results of \cite{Baur16} are distribution invariance principles.

%{\color{blue} Should i cite Wu or more a recent paper on IP? Why the ERW is non-Markovian process? }

%Roughly speaking, 

%Throughout the paper the terms strong invariance principle and weak invariance principle 
%amount to approximating the ERW by a 
%standard Brownian motion almost surely and in probability, respectively, by different scales

%We adopt the terminology weak invariance principle from \cite{SS}. 

%The paper is structured as follows. In section \ref{result} we introduce the model, our main 
%result and its immediate corollaries. In section \ref{proof} we present the proof of our
%theorem.

%\section{The model and main results}\label{result}

We now define the ERW as follows. It starts at $X_0 = 0$. At each discrete time step the 
elephant moves one step to the right or to the left respectively, so 
\begin{equation*}
X_{n+1} = X_n + \eta_{n+1},
\end{equation*}
where $\eta_{n+1} = \pm 1$ is a random variable. The memory consists of the set of random variables 
$\eta_{n^{\prime}}$ at previous time steps which the elephant remembers as follows.

The fist step goes to the right with probability $q$ and to the left with probability $1-q$.
At time $n+1$, for $n\geq 1$, a number $n^{\prime}$ from the set $\{1,2, \ldots , n\}$ is chosen at 
random with probability $1/n$. Then $\eta_{n+1}$ is determined stochastically by the rule 
\begin{align*}
\eta_{n+1} = \left\{\begin{array}{cl}
\eta_{n^{\prime}} & \text{ with probability } \, p \\
-\eta_{n^{\prime}} &  \text{ with probability } 1-p
\end{array}\right.
.
\end{align*}
We get from the definition that $\displaystyle X_n = \sum_{k=1}^n \eta_k$ and that
\begin{align}\label{conditional}
\mathbb{P}[\eta_{n+1} = \eta|\eta_1, \ldots, \eta_n] = \frac{1}{2n} \sum_{k=1}^n \left[1+\left(2p-1\right)\eta_k \eta\right] \ \mbox{for} \ n \geq 1,
\end{align}
where $\eta = \pm 1$. We recall that the anomalous diffusion is justified by the first and second moments of 
$X_n$ that we get from \cite{ST} 
\begin{align*}
\mathbb{E}[X_n] & = (2q-1) \frac{\Gamma(n+(2p-1))}{\Gamma(2p)\Gamma(n)} \sim \frac{2q-1}{\Gamma(2p)} n^{2p-1} \\
%\end{align*}
%\begin{align*}
\mathbb{E}[X_n^{2}] & = \frac{n}{4p -3} \big[ \frac{\Gamma(n+4p-2)}{\Gamma(4p-2)\Gamma(n+1)} -1 \big]
 \sim \left\{\begin{array}{cl}
\frac{n}{3 - 4p} & \text{ for } \, p < 3/4 \\
n \log n & \text{ for } \, p = 3/4 \\
\frac{n^{4p -2}}{(4p -3)\Gamma(4p -2)} & \text{ for } \, p >3/4
\end{array}\right.
.
\end{align*}

Now we state our main result.

\begin{theorem}\label{strongip}
Let $(X_n)_{n \geq 1}$ be the ERW with $p \leq 3/4$ and let $\lbrace W_t \rbrace_{t \geq 0}$
be a standard Brownian motion. Then there exists a common probability space to $\lbrace X_n \rbrace_{n \geq 1}$ 
and $\lbrace W_t \rbrace_{t \geq 0}$ satisfying the following:
\begin{itemize}
\item[a)] If $p < 3/4$, then 
\begin{align}\label{iplil1}
 \left|\sqrt{3 - 4p}\dfrac{X_n }{n^{2p -1}}  - W\left( n^{3 - 4p} \right) \right| & =  \mbox{o}( n^{3/2 - 2p} \log \log n) 
 \quad \text{ a.s.}
\end{align}

\item[b)] If $p = 3/4$, then 
\begin{align} \label{iplil2}
 \left|\dfrac{X_n }{\sqrt{n}}  - W\left( \log n \right) \right| & =  \mbox{o}( \sqrt{ \log n \log \log \log n})
 \quad \text{ a.s.}
\end{align}

\item[c)] If $p < 3/4$, then 
\begin{align}\label{ipclt1}
n^{-(3/2 - 2p)} \left|\sqrt{3 - 4p} \dfrac{X_n }{n^{2p -1}}  - W\left( n^{3 - 4p} \right) \right| \xrightarrow{P} 0.
\end{align}

\item[d)] If $p = 3/4$, then 
\begin{align} \label{ipclt2}
(\log n)^{-1/2} \left|\dfrac{X_n }{\sqrt{n}}  - W\left( \log n \right) \right|  \xrightarrow{P} 0.
\end{align}

\end{itemize}
\end{theorem}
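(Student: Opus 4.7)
My strategy is to reduce everything to a strong approximation for the natural martingale associated with the ERW. Put
\[
a_n=\frac{\Gamma(n)}{\Gamma(n+2p-1)},\qquad M_n=a_nX_n,
\]
so that $(M_n,\mathcal{F}_n)$ is a martingale because $\mathbb{E}[X_{n+1}\mid\mathcal{F}_n]=\frac{n+2p-1}{n}X_n$, with differences
\[
\Delta M_{n+1}=a_{n+1}\Bigl(\eta_{n+1}-\tfrac{2p-1}{n}X_n\Bigr),\qquad |\Delta M_{n+1}|\le 2p\,a_{n+1}.
\]
Stirling gives $a_n\sim n^{1-2p}$; combining this with the explicit second moment of $X_n$ recalled above, together with a Burkholder bound on the martingale $M_n^2-\langle M\rangle_n$, a routine computation yields the a.s.\ asymptotics
\[
\langle M\rangle_n\sim s_n^2,\qquad s_n^2:=\begin{cases} n^{3-4p}/(3-4p) & p<3/4,\\ \log n & p=3/4.\end{cases}
\]

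\noindent\textbf{Martingale strong approximation.}
I then invoke a strong invariance principle for square-integrable martingales (of Strassen--Skorokhod or Heyde--Scott type) to construct, on an enlarged probability space supporting $(X_n)_n$, a standard Brownian motion $W$ with
\[
\bigl|M_n-W(\langle M\rangle_n)\bigr|=o\!\bigl(\sqrt{\langle M\rangle_n}\,\varphi_n\bigr)\quad\text{a.s.,}
\]
for a slowly varying $\varphi_n$ chosen to match the rates in (a)--(b). The conditional Lindeberg and moment hypotheses are immediate from $|\Delta M_n|\le 2p\,a_n$. Once this is in hand, L\'evy's modulus of continuity for $W$ lets me replace the random clock $\langle M\rangle_n$ by $s_n^2$ without spoiling the rate, and a linear rescaling of time realises pathwise the Brownian scaling $W(s_n^2)\stackrel{d}{=}(3-4p)^{-1/2}W(n^{3-4p})$ (and the tautology $W(\log n)=W(\log n)$ in the critical case), producing the Brownian motion appearing in the theorem. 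Finally, writing $X_n=M_n/a_n$ and using $a_n^{-1}\sim n^{2p-1}$ translates the approximation from $M_n$ to $X_n$; collecting all error terms gives (a) and (b).

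\noindent\textbf{The in--probability statements.} Parts (c) and (d) do \emph{not} follow from (a)--(b) by normalising, since $\log\log n\not\to 0$. Instead I would appeal to the in--probability version of the martingale embedding: under the same hypotheses one has the sharper $|M_n-W(\langle M\rangle_n)|=o_P(\sqrt{\langle M\rangle_n})$. Combined with $\langle M\rangle_n/s_n^2\to 1$ in probability and the Brownian modulus, dividing by $n^{3/2-2p}$ (resp.\ $\sqrt{\log n}$) delivers (c) and (d).

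\noindent\textbf{Main obstacle.} The delicate point is tracking every logarithmic factor to match the announced rates, and this is most sensitive in the critical case $p=3/4$, where $s_n^2$ itself is only of order $\log n$ and the target $o(\sqrt{\log n\,\log\log\log n})$ leaves essentially no slack for any of the three error sources: the martingale strong embedding, the swap $\langle M\rangle_n\mapsto s_n^2$ (which ultimately rests on controlling $X_k^2/k$ uniformly on the relevant scale), and the deterministic replacement $a_n^{-1}\leftrightarrow n^{2p-1}$. Keeping these within tolerance---while avoiding circularity, since the LIL-type bound on $X_n$ needed for the last error is in the spirit of the conclusion one is trying to prove---is where most of the care is required.
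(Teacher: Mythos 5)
Your proposal follows essentially the same route as the paper: normalize by $a_n$ to get the natural martingale, apply a Skorokhod/Strassen-type strong approximation to obtain $M_n = W(\text{clock}_n) + \text{error}$, replace the random clock by its deterministic equivalent $s_n^2$ via Brownian increment/continuity estimates, rescale, and for (c)--(d) combine the in-probability convergence of the clock with the modulus of continuity of $W$ exactly as the paper does. The circularity you worry about at the end does not arise: the a.s.\ identification of the conditional variances (equivalently $\langle M\rangle_n \sim s_n^2$) only needs $X_n/n \to 0$ a.s., i.e.\ the strong law already proved in \cite{CGS}, which is strictly weaker than the LIL being established.
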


We announce below two straight consequences of Theorem \ref{strongip}.

\begin{corollary}[LIL]\label{lil}
Let $(X_n)_{n \geq 1}$ be the ERW and let $p \leq 3/4$. 
\begin{itemize}
\item[a)] If $p < 3/4$, then 
\begin{align}
\limsup_{n \to \infty}  \dfrac{|X_n|}{\sqrt{n \log \log n}} = \sqrt{\frac{2}{3 -4p}} \mbox{ a.s. } \nonumber
\end{align}

\item[b)] If $p = 3/4$, then
\begin{align}
\limsup_{n \to \infty}  \dfrac{|X_n|}{\sqrt{n \log n \log \log \log n}} = \sqrt{2} \mbox{ a.s. } \nonumber
\end{align}
\end{itemize}
\end{corollary}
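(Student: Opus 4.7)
The plan is to lift Khintchine's law of the iterated logarithm for standard Brownian motion, namely $\limsup_{t \to \infty} |W(t)|/\sqrt{2 t \log\log t} = 1$ almost surely, through the almost sure coupling furnished by Theorem \ref{strongip}. Since the coupling itself is the hard input and is already in hand, the remaining task is just a careful time-change in Khintchine's LIL together with the verification that the $o$-remainder is negligible against the relevant LIL normalization.

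For part (a), I would first invert \eqref{iplil1} to write $X_n = \frac{n^{2p-1}}{\sqrt{3-4p}}\, W\!\left(n^{3-4p}\right) + R_n$ almost surely, where $R_n$ absorbs the deterministic prefactor multiplying the $o$-remainder from Theorem \ref{strongip}(a). Then divide by $\sqrt{n \log\log n}$, apply Khintchine's LIL at the time $t = n^{3-4p}$, and use $\log\log(n^{3-4p}) = \log((3-4p)\log n) \sim \log\log n$. Collecting the deterministic factors $n^{2p-1}\cdot n^{(3-4p)/2} \cdot (\log\log n)^{1/2} / \sqrt{n \log\log n} = 1$, the main term contributes exactly $\sqrt{2/(3-4p)}$ to the $\limsup$. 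Part (b) proceeds identically from \eqref{iplil2}: write $X_n = \sqrt{n}\, W(\log n) + \sqrt{n}\cdot o\!\left(\sqrt{\log n \log\log\log n}\right)$ a.s., divide by $\sqrt{n \log n \log\log\log n}$, and apply Khintchine's LIL at $t = \log n$, using $\log\log(\log n) = \log\log\log n$ to read off the constant $\sqrt{2}$.

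The main obstacle, and the only place where the rate of Theorem \ref{strongip} is genuinely consumed, is the bookkeeping that confirms that the rescaled $o$-remainder is strictly of smaller order than the LIL normalization: $\sqrt{n \log\log n}$ in (a) and $\sqrt{n \log n \log\log\log n}$ in (b). In case (b) this is immediate, since $\sqrt{n}\cdot o\!\left(\sqrt{\log n \log\log\log n}\right) = o\!\left(\sqrt{n \log n \log\log\log n}\right)$; in case (a) it reduces to a direct comparison of the power and iterated logarithmic factors appearing in Theorem \ref{strongip}(a). Once this check is in place, the conclusion of Corollary \ref{lil} follows at once from Khintchine's LIL for $W$.
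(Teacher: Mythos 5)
Your strategy---transporting Khintchine's LIL for $W$ through the a.s.\ coupling of Theorem \ref{strongip} and checking that the remainder is negligible---is exactly the intended derivation (the paper presents the corollary as a ``straight consequence'' and supplies no separate proof), and your treatment of part (b) is complete and correct: $\sqrt{n}\cdot o(\sqrt{\log n\,\log\log\log n})=o(\sqrt{n\log n\,\log\log\log n})$, which is dominated by the LIL normalization.

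The gap is in part (a), precisely at the step you defer as ``a direct comparison of the power and iterated logarithmic factors.'' That comparison, carried out against the bound as printed in \eqref{iplil1}, fails. The remainder there is $o(n^{3/2-2p}\log\log n)$; multiplying by the prefactor $n^{2p-1}/\sqrt{3-4p}$ gives a remainder for $X_n$ of order $o\bigl(n^{1/2}\log\log n\bigr)$, and dividing by the LIL normalization $\sqrt{n\log\log n}$ leaves $o\bigl((\log\log n)^{1/2}\bigr)$, which need not tend to zero. So the exact constant $\sqrt{2/(3-4p)}$ cannot be extracted from \eqref{iplil1} as stated; the remainder could a priori swamp the main term. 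What rescues the argument is that the proof of Theorem \ref{strongip} actually delivers the sharper rate $W(T_n)=W(s_n^2)+o\bigl(\sqrt{s_n^2\log\log s_n}\bigr)$ with $a_n s_n\sim\sqrt{n/(3-4p)}$, i.e.\ the remainder in \eqref{iplil1} should read $o\bigl(\sqrt{n^{3-4p}\log\log n}\bigr)$ (the iterated logarithm belongs under the square root, in parallel with \eqref{iplil2}); with that bound the rescaled remainder is $o(\sqrt{n\log\log n})$ and your computation closes. You should either flag this discrepancy or rerun the estimate from the proof of the theorem rather than from its displayed statement. A second, more minor omission: Khintchine's LIL gives the $\limsup$ over continuous time, and you apply it along the discrete subsequences $t_n=n^{3-4p}$ and $t_n=\log n$; the upper bound restricts for free, but the lower bound $\limsup\ge 1$ along a subsequence needs the (standard, but worth one line) observation that $t_{n+1}/t_n\to 1$, so the subsequence is dense enough to realize the full $\limsup$.
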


\begin{corollary}[CLT]\label{clt}
Let $(X_n)_{n \geq 1}$ be the ERW and let $p \leq 3/4$. 
\begin{itemize}
\item[a)] If $p < 3/4$, then 
\begin{align}
\sqrt{3 - 4p}\dfrac{X_n }{\sqrt{n}} \xrightarrow{d} N(0,1). \nonumber
\end{align}

\item[b)] If $p = 3/4$, then
\begin{align}
 \dfrac{X_n}{\sqrt{n \log n}} \xrightarrow{d} N(0,1). \nonumber
\end{align}
\end{itemize}
\end{corollary}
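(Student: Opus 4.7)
The plan is to deduce both parts of Corollary \ref{clt} directly from the weak invariance principle in parts (c) and (d) of Theorem \ref{strongip}, together with Slutsky's theorem and the Brownian scaling identity $W(t) \stackrel{d}{=} \sqrt{t}\, W(1)$. Since Theorem \ref{strongip} has already been established on a common probability space, no additional coupling argument is required.

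For part (a), I would divide the bound (\ref{ipclt1}) by $n^{3/2 - 2p}$. Using $n^{2p-1} \cdot n^{3/2 - 2p} = \sqrt{n}$, this yields
\begin{equation*}
\left| \sqrt{3-4p}\, \dfrac{X_n}{\sqrt{n}} - \dfrac{W(n^{3-4p})}{n^{3/2 - 2p}} \right| \xrightarrow{P} 0.
\end{equation*}
Because $(3-4p)/2 = 3/2 - 2p$, Brownian scaling gives $W(n^{3-4p})/n^{3/2-2p} \sim N(0,1)$ for every $n$, and Slutsky's theorem delivers $\sqrt{3-4p}\, X_n/\sqrt{n} \xrightarrow{d} N(0,1)$.

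For part (b), dividing (\ref{ipclt2}) by $\sqrt{\log n}$ gives
\begin{equation*}
\left| \dfrac{X_n}{\sqrt{n \log n}} - \dfrac{W(\log n)}{\sqrt{\log n}} \right| \xrightarrow{P} 0,
\end{equation*}
and $W(\log n)/\sqrt{\log n} \sim N(0,1)$ for every $n \geq 2$, so another application of Slutsky's theorem yields $X_n/\sqrt{n \log n} \xrightarrow{d} N(0,1)$.

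There is no genuine obstacle here: all the analytic work is absorbed into Theorem \ref{strongip}, and the corollary is essentially an exercise in rescaling. The only points requiring mild care are the exponent bookkeeping $(3-4p)/2 = 3/2 - 2p$ in part (a) and the observation that, because convergence in probability to $0$ combined with a sequence that is exactly standard normal (not merely asymptotically so) already suffices, no separate tightness argument is needed.
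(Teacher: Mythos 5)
Your proposal is correct and is exactly the intended derivation: the paper presents the corollary as a ``straight consequence'' of parts (c) and (d) of Theorem \ref{strongip}, via Brownian scaling and Slutsky's theorem, just as you do. (Note only that \eqref{ipclt1} and \eqref{ipclt2} already carry the normalizing factors $n^{-(3/2-2p)}$ and $(\log n)^{-1/2}$, so you need merely distribute them inside the absolute value rather than divide again.)
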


\begin{remark}
If $p = 1/2$, the ERW becomes the simple symmetric random walk (SSRW) on $\mathbb{Z}$. See equation 
\eqref{conditional}. Note that our results on the invariance principles, LIL and CLT, are in accordance 
with the results in the literature for the SSRW.
\end{remark}

In the next section we present the proof of Theorem \ref{strongip}.

\section{Proof of Theorem \ref{strongip}}\label{proof}
Before presenting the proof of Theorem \ref{strongip} we collect some facts concerning the ERW in 
order to prove our theorem. 
%We begin with the statement without proof of Kronecker's lemma. The proof of Kronecker's lemma can be found in section 12.7 of \cite{Wi}.
%\begin{lemma}\label{Kro}
%Let $(x_n)_{n \geq 1}$ and $(b_n)_{n \geq 1}$ be sequences of real numbers such that $0 < b_n \nearrow +\infty$. 
%If $\sum_k x_k/b_k$ converges, then $\displaystyle \lim_{n \rightarrow +\infty} \frac{\sum_{k=1}^n x_k}{b_n} = 0$.
%\end{lemma}
%Next we recall some properties concerning the ERW. 
Let
\begin{align*}
 a_1 := 1 \, \text{ and } \, a_{n} := \frac{\Gamma(n + 2p -1)}{\Gamma(n)\Gamma(2p)}  \, \text{ for } n \geq 2.
\end{align*}
Then define
\begin{align*}
M_n := \frac{X_n - \mathbb{E}[X_n]}{a_n}.
\end{align*}
In \cite{CGS} we showed that $\{ M_n \}_{n \geq 1} $ is a martingale with respect to the natural filtration $\mathcal{F}_n = \sigma(\eta_1, \ldots, \eta_n)$. Let
\begin{align*}
s^2_1 :=q(1-q) \, \text{ and } s^2_n := q(1-q) + \sum_{j=2}^n \frac{1}{a^2_j} \, \text{ for } n \geq 2.
\end{align*}
In \cite{CGS} we deduced that 
\begin{align}
a_{n} &\sim \frac{n^{2p -1}}{\Gamma(2p)}, \nonumber \\
s_n^{2} &\sim  \Gamma(2p)^{2} \frac{n^{3 - 4p}}{3 - 4p} \text{ if } p < 3/4,  \nonumber\\
s_n^{2} &\sim \Gamma(3/2)^{2} \log n, \text{ if } p =3/4, \label{asymptotic} \\
a_n s_n &\sim \sqrt{n/(3 - 4p)} \, \text{ if } p < 3/4,  \nonumber \\ 
a_n s_n &\sim \sqrt{n \log n} \, \text{ if } p =3/4. \nonumber 
\end{align}

\begin{proof}[Proof of Theorem \ref{strongip}]
Let us make a construction that will applied to the proof of \eqref{iplil1} and \eqref{iplil2} first and 
to \eqref{ipclt1} and \eqref{ipclt2} next. We begin by applying the Skorokhod embedding theorem for 
martingales (e.g.,  Theorem A.1, page 269 in \cite{HH}) to $M_n$. It states that there exists a new 
probability space where it is defined a standard Brownian motion $\lbrace W(t) \rbrace_{t \geq 0}$ 
and a sequence of non-negative r.v.'s $\tau_1, \tau_2, \ldots$ such that $W(T_n)$ has the same distribution 
of $M_n$ for any $n \geq 1$ where 
$T_n = \sum_{i = 1}^{n}\tau_i$ is $\mathcal{G}_n := \sigma(W(T_1), \ldots, W(T_n); W(t), 0 \leq t \leq T_n)$-measurable. 
If $D_1:=W(T_1)$ and $D_n:= W(T_n)-W(T_{n-1})$ for any $n \geq2$, then
\begin{align}\label{2r}
\mathbb{E}(\tau_j | \mathcal{G}_{j-1}) = \mathbb{E}(D_j^{2} | \mathcal{G}_{j-1}), \quad
\mathbb{E}(\tau_j^{r} | \mathcal{G}_{j-1}) \leq c_r \mathbb{E}(D_j^{2r} | \mathcal{G}_{j-1}) \, \mbox{ a.s. } 
\end{align}
for any $r \geq 1$ and for a positive constant $c_r$ depending only on $r$.

We know from \cite{CGS} that 
\begin{eqnarray}\label{ipdj}
\mathbb{E}\left[D_j^2 | \mathcal{G}_{j-1}\right] = \frac{1}{a^2_j}\left( 1 +  \mbox{o}(1) \right) \quad \text{a.s.}
\end{eqnarray}
Moreover, it comes out by (\ref{2r}) and (\ref{ipdj}) that
\begin{eqnarray}\label{tauG}
\sum_{j = 1}^{n}\mathbb{E}(\tau_j| \mathcal{G}_{j-1}) = s_n^2 (1 + \mbox{o}(1)) \quad \text{a.s.}
\end{eqnarray}

Now define $\hat{\tau}_j = \tau_j - \mathbb{E}(\tau_j| \mathcal{G}_{j-1})$, $j = 1, \ldots n$. Since $\hat{\tau}_j$
is a martingale difference with respect to $\mathcal{G}_j$, we obtain from \eqref{2r} and the fact that $|D_j| \leq 4/a_j$ (see \cite{CGS}) that
\begin{align*}
\mathbb{E}(\hat{\tau}_j^{2}|\mathcal{G}_{j-1})  = \mathbb{E}({\tau}_j^{2}| \mathcal{G}_{j-1}) - (\mathbb{E}({\tau}_j| \mathcal{G}_{j-1}))^{2} \leq c \frac{1}{a_j^{4}}
\end{align*}
for some positive constant $c$. It is easy to see from (\ref{asymptotic}), for any $p \leq 3/4$, that
\begin{align*}
\sum_{j =1}^{\infty} \mathbb{E}(\frac{\hat{\tau}_j^{2}}{s_j^{4}}|\mathcal{G}_{j-1}) & \leq c\sum_{j =1}^{\infty} \frac{1}{a_j^{4}s_j^{4}} < \infty.
\end{align*}
By Theorem 2.18 of \cite{HH} we have that $\displaystyle \sum_{j = 1}^{\infty}\frac{\hat{\tau}_j}{s_j^{2}} < \infty$ a.s. 
Kronecker's lemma (see section 12.7 of \cite{Wi}) implies that
\begin{align}\label{hattau}
\sum_{j = 1}^{n}\hat{\tau}_j = \mbox{o}(s_n^{2}) \quad \text{a.s.}
\end{align}
Combining (\ref{tauG}) and (\ref{hattau}) we get 
\begin{align}\label{tnsn}
T_n = \sum_{j = 1}^{n} \tau_j = s_n^{2} + \mbox{o}(s_n^{2}) \quad \text{a.s.}
\end{align}

Now we turn our attention to the proof of equations (\ref{iplil1}) and (\ref{iplil2}). By (\ref{tnsn}) and Theorem 1.2.1 in \cite{CR} we can assert that $W (T_n) = W(s_n^{2}) + \mbox{o}(\sqrt{s_n^{2}\log \log s_n})$ a.s. 
%(with $T = s_n^{2}$ and $a_{T} = \varepsilon s_n^{2}$ for some $\varepsilon > 0$)

We recall from \cite{CGS} that
\begin{align}\label{mean}
\mathbb{E}[X_n] & \sim \frac{2q-1}{\Gamma(2p)} n^{2p-1}.
\end{align}
Therefore, it follows from \eqref{asymptotic} and \eqref{mean} that $\mathbb{E}[X_n]/a_n \sqrt{s_n^{2}\log \log s_n} \to 0$ for $p < 3/4$ as well as for $p = 3/4$. This finishes the proof of (\ref{iplil1}) and (\ref{iplil2}).

Now we use equation (\ref{tnsn}) to prove items \eqref{ipclt1} and \eqref{ipclt2} of Theorem \ref{strongip}. Given $\varepsilon > 0$ and $\delta > 0$, we have that
\begin{align*}
\mathbb{P}\left( \frac{|W(T_n) - W(s_n^{2})|}{s_n}  > \varepsilon \right) & =  
\mathbb{P}\left( \frac{|W(T_n) - W(s_n^{2})|}{s_n}  > \varepsilon , \, |T_n - s_n^{2}| > \delta s_n^{2} \right) \\
& + \mathbb{P}\left( \frac{|W(T_n) - W(s_n^{2})|}{s_n}  > \varepsilon , \, |T_n - s_n^{2}| \leq \delta s_n^{2} \right).
\end{align*}

Let us show that both terms of the last equality converge to zero when $n$ diverges to infinity. Since $|T_n - s_n^{2}|/ s_n^{2} \to 0$ a.s. as $n \to \infty$, we get 
\begin{align*}
\mathbb{P}\left( \frac{|W(T_n) - W(s_n^{2})|}{s_n}  > \varepsilon , \, |T_n - s_n^{2}| > \delta s_n^{2} \right) 
\leq \mathbb{P}\left( |T_n - s_n^{2}| > \delta s_n^{2} \right) \to 0 \, \mbox{ as } \, n \to \infty.
\end{align*}
On the other hand, after rescaling the Brownian motion, we get
\begin{align*}
\mathbb{P}\left( \frac{|W(T_n) - W(s_n^{2})|}{s_n}  > \varepsilon , \, |T_n - s_n^{2}| \leq  \delta s_n^{2} \right) 
\leq \mathbb{P}\left( \sup_{s; |s-1| \leq \delta} |W(s) - W(1)|  > \varepsilon \right)
\end{align*}
which goes to zero as $\delta \to 0$ since the trajectories of a Brownian motion are almost surely uniformly continuous on the compact set $[1 - \delta, 1]$. Now the proof of items \eqref{ipclt1} and  \eqref{ipclt2} of Theorem \ref{strongip} follows from the fact that $\mathbb{E}[X_n]/a_n s_n \to 0$ for $p < 3/4$ as well as for $p = 3/4$, by \eqref{asymptotic} and \eqref{mean}.
\end{proof}

\section{Acknoledegments}
The first author was partially supported by  by FAPESP (grant number
2015/20110-0 and 2016/11648-0). The second author was partially supported by CNPq (461365/2014-6).

\end{document}